\numberwithin{equation}{section}
\setlist[enumerate,1]{label={\upshape(\roman*)}}
\theoremstyle{theorem}
\newtheorem{theorem}{Theorem}[section]
\newtheorem{cor}[theorem]{Corollary}
\theoremstyle{definition}
\theoremstyle{remark}
\theoremstyle{proof}
\newcommand{\R}{\mathbb{R}}
\newcommand{\I}{\mathrm{I}}
\renewcommand{\H}{\mathrm{H}}
\begin{document}

\title{$W^{1,1}$ stability for the LSI}

\author{Emanuel Indrei \\
\\}
\address{Department of Mathematics and Statistics\\
Sam Houston State University\\
Huntsville, TX \\
USA.}
\begin{abstract}
The logarithmic Sobolev inequality is fundamental in mathematical physics. Associated stability estimates are equivalent to uncertainty principles. Via a second moment bound, $W^{1,1}$ estimates are obtained in one dimension and similar $W_1$-quantitative estimates are investigated.

\end{abstract}
\maketitle

\section{Introduction}
The classical logarithmic Sobolev inequality LSI attributed to L. Gross states
\begin{equation}\label{eq:LSI}
	\delta(f):=\frac{1}{2}\I(f)-\H(f) =\frac{1}{2}\int \frac{|\nabla f|^{2}}{f}d\gamma-\int f\log f d\gamma \ge 0,
\end{equation}
where $d\gamma=(2\pi)^{-\frac{n}{2}}e^{-\frac{|x|^{2}}{2}}dx$, $f$ is normalized, $\sqrt{f}\in W^{1,2}(\R^{n},d\gamma)$, $\&$ $\delta$ is the LSI deficit \cite{3269872e, MR1894593}. 
Carlen \cite{MR1132315} characterized the equality cases: equality holds in \eqref{eq:LSI} if and only if $f_b(x)=e^{b\cdot x-\frac{b^{2}}{2}},$ $b \in \mathbb{R}^n$. 
Note that if $f$ is normalized and centered, equality is valid  if and only if $f=1$. Therefore, a natural problem is to identify a metric $d$, $a>0$, and $\alpha>0$ such that 
\begin{equation} \label{l}
\delta(f) \ge a d^\alpha(f,1),
\end{equation}
with $f$ normalized and centered. In a recent paper, a variant of \eqref{l} with $\alpha=2$, a constant $a>0$, and $d(f,h)=||f-h||_{L^1(\mathbb{R}^n, d\gamma)}$ was established  \cite{Log}. In \cite{e2as49}, $\alpha=2$ was proven to be sharp. Thus, the problem of identifying the strongest norm was a natural consequence which was addressed in \cite{e2as49}.  
Set
$$
f=|u|^2(\frac{x}{\sqrt{2\pi}}).
$$
Then 
$$
\int f e^{-\frac{|x|^{2}}{2}}(2\pi)^{-\frac{n}{2}}dx=     \int |u|^2 e^{-\pi |x|^2}dx.
$$

$$
\frac{1}{2}\int \frac{|\nabla f|^{2}}{f}d\gamma-\int f\ln f d\gamma= \frac{1}{\pi}\int |\nabla u|^2 e^{-\pi |x|^2}dx-\int |u|^2 \ln |u|^2   e^{-\pi |x|^2}dx.
$$
Therefore \eqref{l} has an equivalent version
$$
\pi \delta_*(u):=\int |\nabla u|^2 e^{-\pi |x|^2}dx-\pi \int |u|^2 \ln |u|^2   e^{-\pi |x|^2}dx \ge \kappa  \int \big|u-1\big|^2 e^{-\pi |x|^2}dx
$$
in the space of non-negative functions which satisfy
$$
||u||_{L^2(\mathbb{R}^n, e^{-\pi |x|^2}dx)}=1
$$
$$
\int x |u|^2e^{-\pi |x|^2}dx=0.
$$
Suppose $\delta_*(u_k) \rightarrow 0$ as $k\rightarrow \infty$, 
then \cite{e2as49} yields
$$|u_k|\to 1$$
in $H^1(e^{-\pi |x|^2}dx)$ if and only if

$$
\int |x|^2|u_k(x)|^2e^{-\pi |x|^2}dx \rightarrow \int |x|^2e^{-\pi |x|^2}dx.\\
$$
Thus, some additional assumptions are necessary to prove
\begin{equation} \label{lw}
\delta_*(u) \ge a d^\alpha(u,1)
\end{equation}
with $d(u, g)=||u-g||_{H^1(e^{-\pi |x|^2}dx)}$. Supposing the associated measures satisfy a Poincar\'e inequality via $\lambda>0$, the stability with the sharp $\alpha=2$ and also best constant $a=a(\lambda)>0$ was proven in \cite{MR3567822}.
It has already been underscored in \cite{arXiv:23026} that a uniform upper bound on the second moment is necessary but not sufficient to prove \eqref{lw} with the $H^1=W^{1,2}$ norm if $\delta_*(u) \rightarrow 0$. A sufficient condition was shown with some type of uniform exponential moment condition. Observe a fourth moment bound also yields \eqref{lw} \cite{e2as49}.   The problem of understanding the extra information that is needed with assuming a second moment bound was discussed on \cite[p. 6]{arXiv:23026}. The main result in my paper is that if $n=1$, while  the second moment control does not imply stability in $W^{1,2}$, it indeed implies stability in $W^{1,1}$:

\begin{theorem} \label{p5}
\noindent (1) If $f$ is normalized and centered in $L^1(\mathbb{R}, d\gamma)$ $\&$
$$
m_2(fd\gamma):=\int_{\mathbb{R}}|x|^{2}fd\gamma \le \alpha<\infty,
$$
then there exists $a_\alpha>0$ so that
$$
||f-1||_{W^{1,1}(\mathbb{R}, d\gamma)} \le a_\alpha \Big(\delta^{\frac{1}{4}}(f)+\delta^{\frac{3}{4}}(f)\Big).
$$
\noindent (2) If $u$ is normalized and centered in $L^2(\mathbb{R}, e^{-\pi |x|^2}dx)$ $\&$
$$
\int |x|^2|u(x)|^2e^{-\pi |x|^2}dx \le \alpha<\infty,
$$
then there exists $\overline{a}_\alpha>0$ so that
$$
||u-1||_{W^{1,1}(\mathbb{R}, e^{-\pi |x|^2}dx)} \le  \overline{a}_\alpha(\delta_*(u)^{\frac{1}{4}}+\delta_*(u)).
$$
\end{theorem}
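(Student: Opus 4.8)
\emph{Reduction and the role of the second moment.} The plan is to split
$\|f-1\|_{W^{1,1}(\R,d\gamma)}=\|f-1\|_{L^1(d\gamma)}+\|\nabla f\|_{L^1(d\gamma)}$ and treat the two summands separately. For the zeroth–order one I would quote the $L^1$–stability of \cite{Log}: since $f$ is normalized and centered, $\delta(f)\gtrsim\|f-1\|_{L^1(d\gamma)}^2$, so $\|f-1\|_{L^1(d\gamma)}\lesssim\delta(f)^{1/2}\le\delta(f)^{1/4}+\delta(f)^{3/4}$, the last step using $t^{1/2}\le t^{1/4}+t^{3/4}$ for $t\ge0$ — the device by which the error terms below will be absorbed. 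One may also assume $\delta(f)\le1$: otherwise the crude bound $\|\nabla f\|_{L^1(d\gamma)}\le\I(f)^{1/2}=(2\delta(f)+2\H(f))^{1/2}$ (Cauchy--Schwarz together with $\int f\,d\gamma=1$) and the Fisher information estimate below already give $\|f-1\|_{W^{1,1}}\lesssim_\alpha\delta(f)^{3/4}$. Thus the problem becomes: bound $\|\nabla f\|_{L^1(d\gamma)}$ when $\delta(f)$ is small. Writing $g:=\sqrt f\in W^{1,2}(\R,d\gamma)$ we have $\nabla f=2g\nabla g$, $\int g^2\,d\gamma=1$, $\int(\nabla g)^2\,d\gamma=\tfrac14\I(f)$ and $\|\nabla f\|_{L^1(d\gamma)}=2\int g|\nabla g|\,d\gamma$. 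The first ingredient is the a priori estimate $\I(f)\le C_\alpha(1+\delta(f))$, equivalently $\H(f)=\Ent_\gamma(f)\le C_\alpha(1+\delta(f))$: here the hypothesis $m_2(fd\gamma)\le\alpha$ enters, through the pointwise control coming from $g\,e^{-|x|^2/4}\in W^{1,2}(\R,dx)$ and the entropy–Fisher interplay along the Ornstein--Uhlenbeck flow in the spirit of \cite{arXiv:23026, e2as49}. It is important that $\H(f)$ itself need \emph{not} be small along minimizing sequences of bounded second moment — this is precisely the obstruction to $W^{1,2}$–stability recalled in the Introduction — so plain Cauchy--Schwarz is too lossy and one must exploit that $\nabla f$ concentrates on sets of small $\gamma$–measure.

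\emph{Localizing the gradient.} Accordingly I would split $\R=B\cup G$ with $B:=\{f>2\}\cup\{|x|>R\}$, $G:=\R\setminus B$ and $R:=\delta(f)^{-1/4}$. On the bad set $\int_{\{f>2\}}f\,d\gamma\le 2\|f-1\|_{L^1(d\gamma)}\lesssim\delta(f)^{1/2}$ (Markov) and $\int_{\{|x|>R\}}f\,d\gamma\le m_2(fd\gamma)/R^2\le\alpha\,\delta(f)^{1/2}$, so the localized Cauchy--Schwarz
\[
2\int_B g|\nabla g|\,d\gamma\le\Bigl(\int_B f\,d\gamma\Bigr)^{1/2}\I(f)^{1/2}
\]
together with $\I(f)\le C_\alpha(1+\delta(f))$ gives a contribution $\lesssim_\alpha\delta(f)^{1/4}(1+\delta(f))^{1/2}\lesssim_\alpha\delta(f)^{1/4}+\delta(f)^{3/4}$. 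On the good set $f\le 2$ (so $g\le\sqrt2$), and the point is to show
\[
\int_G(\nabla g)^2\,d\gamma\lesssim_\alpha\delta(f)^{1/2},
\]
after which $2\int_G g|\nabla g|\,d\gamma\le 2\sqrt2\,(\int_G(\nabla g)^2\,d\gamma)^{1/2}\lesssim_\alpha\delta(f)^{1/4}$, since $\gamma$ is a probability measure. Adding the two contributions and combining with the reduction step proves (1), and Part (2) then follows through the substitution $f=|u|^2(\cdot/\sqrt{2\pi})$ recorded in the Introduction: it identifies $\delta_*(u)=\delta(f)$ and matches the normalization, centering and second–moment constraints, while on the space of non-negative competitors $u=\sqrt f(\cdot/\sqrt{2\pi})$, so $\bigl|\,|u|-1\,\bigr|\le|f-1|$ and the chain rule transfer the $W^{1,1}$–estimate for $f-1$ to $u-1$.

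\emph{The core estimate on $G$.} Writing $\delta(f)=2\int_\R(\nabla g)^2\,d\gamma-\int_\R(f\log f-f+1)\,d\gamma$ and splitting the two integrals over $G$ and $B$, one gets
\[
2\int_G(\nabla g)^2\,d\gamma=\delta(f)+\int_G(f\log f-f+1)\,d\gamma-\Bigl(2\int_B(\nabla g)^2\,d\gamma-\int_B(f\log f-f+1)\,d\gamma\Bigr).
\]
Since $f\le2$ on $G$ one has $f\log f-f+1\le|f-1|$ there, so the middle term is $\le\|f-1\|_{L^1(d\gamma)}\lesssim\delta(f)^{1/2}$. It remains to see that the ``deficit of the bad set'', the last parenthesis, is $\ge-C_\alpha\delta(f)^{1/2}$. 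On $B\cap\{f\le2\}=\{|x|>R\}\cap\{f\le2\}$ this is immediate from $f\log f-f+1\le|f-1|$ and $\int_{\{|x|>R\}}|f-1|\,d\gamma\le\|f-1\|_{L^1(d\gamma)}$; the substantive case is the superlevel set $\{f>2\}$, where one needs $2\int_{\{f>2\}}(\nabla g)^2\,d\gamma\ge\int_{\{f>2\}}(f\log f-f+1)\,d\gamma-C_\alpha\delta(f)^{1/2}$, i.e. that truncating $f$ at level $2$ does not destroy much of the logarithmic Sobolev deficit. I would obtain this either from a localized form of the logarithmic Sobolev inequality on the superlevel set (treating the contribution of $\{f=2\}$, whose $\gamma$–measure is $\lesssim\delta(f)^{1/2}$ by Markov, as a boundary term), or by invoking the description of near-extremizers in \cite{e2as49} — for the model near-extremizers (a thin, tall bump of small mass far out, which is responsible for $\H(f)\not\to0$) removing the region $\{f>2\}$ removes \emph{balanced} amounts of Fisher information and of entropy, so their deficit-contribution is essentially nonnegative, and the general statement should hold up to the stated error.

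\emph{Main obstacle.} The genuine difficulty is this last point: showing that cutting $f$ at level $2$ costs at most $O_\alpha(\delta(f)^{1/2})$ of deficit. This is a ``local'' log-Sobolev / flatness statement for near-extremizers that has to be honest about the region where $f$ is large (the same region that keeps $\H(f)$ away from $0$), and it is what forces the argument into one dimension; getting the clean exponent $\tfrac14$ requires the error there to be of order $\delta(f)^{1/2}$ rather than merely $o(1)$. The a priori Fisher information bound from the second moment is the other point needing care, though it is much closer to what is already in \cite{arXiv:23026, e2as49}.
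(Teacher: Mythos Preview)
Your proposal leaves the decisive step open. You yourself flag it: the ``core estimate on $G$'' requires that the localized deficit on the superlevel set $\{f>2\}$ satisfy
\[
2\int_{\{f>2\}}(\nabla g)^2\,d\gamma-\int_{\{f>2\}}(f\log f-f+1)\,d\gamma\ \ge\ -C_\alpha\,\delta(f)^{1/2},
\]
and you offer only two heuristics (a ``localized LSI'' or the near-extremizer picture of \cite{e2as49}) rather than a proof. The logarithmic Sobolev inequality does not localize in any obvious way, and for the relevant near-extremizers both terms above are of order $\H(f)$, which under a mere second-moment bound need not be small; controlling their \emph{difference} to order $\delta^{1/2}$ is exactly the hard part, and nothing in the outline supplies it. Without this, the whole good/bad set decomposition collapses.

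The paper avoids this localization issue entirely by going through optimal transport. If $T$ is the Brenier map from $fd\gamma$ to $d\gamma$, Cordero-Erausquin's proof of LSI gives $\delta(f)\ge\tfrac12\int|T-x+\nabla\log f|^2 f\,d\gamma$, and in one dimension $\int|T-x|f\,d\gamma=W_1(fd\gamma,d\gamma)$, so the triangle inequality yields directly
\[
\int_{\R}|\nabla f|\,d\gamma=\int_{\R}|\nabla\log f|\,f\,d\gamma\le W_1(fd\gamma,d\gamma)+\sqrt{2\delta(f)}.
\]
The second-moment bound then enters only to control $W_1$: the inequality $\delta(f)\ge\tfrac14\bigl(2\H(f)+m_2(\gamma)-m_2(fd\gamma)\bigr)^2$ from \cite{IK18} gives $\H(f)\lesssim_\alpha 1+\delta^{1/2}$, and feeding this into the known bound $W_1\lesssim\max\{(\H\delta)^{1/4},(\H\delta)^{1/2}\}$ produces $W_1\lesssim_\alpha\delta^{1/4}+\delta^{3/4}$. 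No level-set splitting, no localized LSI.

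There is also a gap in your transfer to part~(2). You cannot pass from a $W^{1,1}$ bound on $f-1$ to one on $u-1$ by ``the chain rule'': since $u=\sqrt f$ (up to rescaling), $|\nabla u|=|\nabla f|/(2\sqrt f)$ is \emph{not} dominated by $|\nabla f|$ where $f$ is small. The paper handles this by writing $\int|\nabla u|=\int|\nabla u||u|+\int|\nabla u|(1-|u|)$, identifying the first term with $\int|\nabla f|\,d\gamma$, and bounding the second via Cauchy--Schwarz using the $L^2$-stability $\|1-u\|_{L^2}\lesssim\delta_*^{1/2}$ together with an a priori bound on $\|\nabla u\|_{L^2}$ coming from \cite[Theorem 1.17]{IK18}.
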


Therefore a natural curiosity is the higher dimensional analog but that remains an open problem. However the following can be proven:
\begin{cor} \label{wl}
\noindent (1) If $f(x_1,x_2,\ldots,x_n)=\Pi_{k=1}^n f_k(x_k)$, $f_k$ is normalized and centered in $L^1(\mathbb{R}, d\gamma)$ $\&$
$$
m_2(f_k) \le \alpha,
$$
then
$$
||f-1||_{W^{1,1}(\mathbb{R}^n, d\gamma)} \le a_{\alpha}n^{3/4} \Big(\delta^{\frac{1}{4}}(f)+\delta^{\frac{3}{4}}(f)\Big),
$$
with $a_{\alpha}$ as in Theorem \ref{p5}.\\

\noindent (2) If $u(x_1,x_2,\ldots,x_n)=\Pi_{k=1}^n u_k(x_k)$, $u_k$ is normalized and centered in $L^2(\mathbb{R}, e^{-\pi |x|^2}dx)$ $\&$
$$
\int |x_k|^2|u_k(x_k)|^2e^{-\pi |x_k|^2}dx_k \le \alpha<\infty,
$$
then 
$$
||u-1||_{W^{1,1}(\mathbb{R}^n, e^{-\pi |x|^2}dx)} \le  n\overline{a}_\alpha(\delta^{\frac{1}{4}}_*(u)+\delta_*(u)),
$$
with $\overline{a}_{\alpha}$ as in Theorem \ref{p5}.\\
\end{cor}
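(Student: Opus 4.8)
The plan is to deduce Corollary~\ref{wl} from Theorem~\ref{p5} using two ingredients: the additivity of the deficit on tensor products, and a telescoping estimate that controls the $n$-dimensional $W^{1,1}$ norm of $f-1$ by the sum of the one-dimensional norms of the $f_k-1$. I describe part (1); part (2) is identical with $d\gamma$ replaced by $e^{-\pi|x|^2}dx$ (a product of the one-dimensional factors $e^{-\pi x_k^2}dx_k$) and with $\int_{\R}f_k\,d\gamma=1$ replaced by $\|u_k\|_{L^1}\le\|u_k\|_{L^2}=1$.

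First I would record the reductions. Since $\gamma^{\otimes n}$ is a product of copies of $\gamma$ and each $f_k$ is normalized and centered, $f$ is itself normalized ($\int f\,d\gamma^{\otimes n}=\prod_k\int f_k\,d\gamma=1$) and centered componentwise, while the second-moment bounds on the $f_k$ force $\sqrt f\in W^{1,2}(\R^n,d\gamma)$ and $\delta(f)<\infty$. Because each coordinate of $\nabla\log f$ depends on a single variable, the Fisher information and the entropy are additive over the product, so
\begin{equation}\label{eq:cortens}
\delta(f)=\sum_{k=1}^n\delta(f_k),\qquad\text{hence } \delta(f_k)\le\delta(f)\ \text{ for all }k.
\end{equation}

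Next comes the telescoping comparison
\begin{equation}\label{eq:cortele}
\big\|f-1\big\|_{W^{1,1}(\R^n,d\gamma)}\ \le\ \sum_{j=1}^n\big\|f_j-1\big\|_{W^{1,1}(\R,d\gamma)} .
\end{equation}
For the $L^1$ part, write $\prod_{k=1}^n f_k-1=\sum_{j=1}^n\big(\prod_{k<j}f_k\big)(f_j-1)$ and integrate against $d\gamma^{\otimes n}$: by Fubini the $j$-th summand factors, the factors $\int_{\R}|f_k|\,d\gamma=1$ for $k<j$ (using $f_k\ge0$) and $\int_{\R}d\gamma=1$ for $k>j$ disappear, and only $\|f_j-1\|_{L^1(\R,d\gamma)}$ survives. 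For the derivative part, $\partial_i f=f_i'\prod_{k\ne i}f_k$, so $\int_{\R^n}|\partial_i f|\,d\gamma^{\otimes n}=\|f_i'\|_{L^1(\R,d\gamma)}$, and the elementary bound $|\nabla f|\le\sum_i|\partial_i f|$, together with $(f_j-1)'=f_j'$, yields \eqref{eq:cortele}. Applying Theorem~\ref{p5}(1) to each $f_j$ in \eqref{eq:cortele} and setting $d_j:=\delta(f_j)\ge0$ with $\sum_j d_j=\delta(f)$ by \eqref{eq:cortens}, concavity of $t\mapsto t^\theta$ gives $\sum_j d_j^\theta\le n^{1-\theta}\big(\sum_j d_j\big)^\theta$, so with $\theta=\tfrac14$ and $\theta=\tfrac34$
\begin{equation}\label{eq:corfin}
\sum_{j=1}^n\big(\delta^{1/4}(f_j)+\delta^{3/4}(f_j)\big)\ \le\ n^{3/4}\delta^{1/4}(f)+n^{1/4}\delta^{3/4}(f)\ \le\ n^{3/4}\big(\delta^{1/4}(f)+\delta^{3/4}(f)\big),
\end{equation}
which is the asserted estimate. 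For part (2), \eqref{eq:cortele} holds verbatim with $u$ in place of $f$ (now using only $\|u_k\|_{L^1}\le1$), and Theorem~\ref{p5}(2) together with $\sum_j\delta_*^{1/4}(u_j)\le n^{3/4}\delta_*^{1/4}(u)$, $\sum_j\delta_*(u_j)=\delta_*(u)$, and $n^{3/4}\le n$ gives the stated bound.

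There is no genuine obstacle: all the analysis is contained in Theorem~\ref{p5}, and what remains is bookkeeping. The two points to handle with care are the justification of \eqref{eq:cortens} (routine once $\sqrt f\in W^{1,2}$, so that the LSI itself rules out an $\infty-\infty$ in the entropy) and the exponent of $n$: the crude reading of \eqref{eq:cortele} would produce a factor $n$, and the sharpening to $n^{3/4}$ is precisely the Jensen gain on the sublinear exponent $\theta=\tfrac14$ in \eqref{eq:corfin}.
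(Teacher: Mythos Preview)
Your argument is correct and follows essentially the same approach as the paper: tensorization of the deficit $\delta(f)=\sum_k\delta(f_k)$, the coordinate-wise bound $|\nabla f|\le\sum_k|\partial_{x_k}f|$ with $\int|\partial_{x_k}f|\,d\gamma=\|f_k'\|_{L^1(\R,d\gamma)}$, and the concavity estimate $\sum_k d_k^{\theta}\le n^{1-\theta}\delta(f)^{\theta}$ to produce the $n^{3/4}$ factor. Your treatment is in fact slightly more complete than the paper's, since you explicitly handle the $L^1$ part of the $W^{1,1}$ norm via the telescoping identity $\prod_k f_k-1=\sum_j\bigl(\prod_{k<j}f_k\bigr)(f_j-1)$, whereas the paper's proof only writes out the gradient contribution.
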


 The \emph{Wasserstein distance} between two probability measures $\mu,\nu$ with $ p\ge 1$, $m_{p}(d\mu)=\int_{\R^{n}}|x|^{p}d\mu<\infty$, $m_{p}(d\nu)=\int_{\R^{n}}|x|^{p}d\nu<\infty$ is 
\begin{align*}
	W_{p}(d\mu,d\nu)=\inf_{\pi} \Big(\iint_{\R^{n}\times \R^{n}} |x-y|^{p}d\pi(x,y)\Big)^{\frac{1}{p}}
\end{align*}
where the infimum is taken over all probability measures $\pi$ on $\R^{n}\times \R^{n}$ with marginals $\mu$ and $\nu$.
In particular, $W_{1}$ is called the Kantorovich--Rubinstein distance. The stability for $W_1$ has already appeared in \cite{IK18}:
	let $fd\gamma$ be a centered probability measure, $m_2(fd\gamma)=\int_{\R^{n}}|x|^{2}fd\gamma \le M<\infty$. There exists a constant $C=C(n,M)>0$ such that 
	\begin{align*}
		\delta(f)
		\geq		C\min\{W_{1}(fd\gamma, d\gamma),W_{1}^{4}(fd\gamma, d\gamma)\}.
	\end{align*}
$W_1$--stability is not true if one merely has finite second moments, therefore  $C=C(n,M)$ cannot be taken independent of $M$ \cite[Theorem 1.2]{MR4455233}.
Examples in \cite{MR4455233} allude to
$$
\delta(f)
		\geq		CW_{1}^{2}(fd\gamma, d\gamma),
$$
with $f \in \{f \ge 0: m_2(fd\gamma)\le M, ||f||_{L^1(\mathbb{R}^n, d\gamma)}=1, \int_{\R^{n}} xfd\gamma=0\}$. Supposing a type of uniform exponential moment condition, this holds:

\begin{theorem} \label{px5}
If $f$ is normalized and centered in $L^1(\mathbb{R}^n, d\gamma)$, $\alpha>0$, $\epsilon>0$, $\&$
$$
\int \int f(x)f(y) e^{\epsilon|x-y|^2}d\gamma(x) d\gamma(y) \le \alpha,
$$
then there exists $a_{\alpha, \epsilon}>0$ so that
$$
W_{1}(fd\gamma,d\gamma) \le a_{\alpha, \epsilon} \delta^{\frac{1}{2}}(u).
$$
Also,  supposing $\epsilon < .25$ combined with
$$
\alpha >\frac{1}{(2\pi)^n}\large(\frac{\pi}{-2\epsilon+.5}\large)^{n},
$$
the best possible $a_{\alpha, \epsilon}$ necessarily has a lower bound:
$$
a_{\alpha, \epsilon} \ge \frac{|nm_1(\gamma)-m_3(\gamma)|}{\sqrt{n}}.
$$
When $n=1$,
$$
a_{\alpha, \epsilon} \ge \sqrt{\frac{2}{\pi}}.
$$
In addition, the exponent on the deficit is sharp.
\end{theorem}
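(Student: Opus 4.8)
The plan is to run everything through a Hellinger-type distance rather than through $\|f-1\|_{L^{1}(d\gamma)}$. Since $|x-y|^{2}\le 2|x|^{2}+2|y|^{2}$, Jensen's inequality applied in the inner variable together with $\int x f\,d\gamma=0$ gives $\int f(y)e^{\epsilon|x-y|^{2}}\,d\gamma(y)\ge e^{\epsilon|x|^{2}}$, so the hypothesis forces the one-variable bound $\int f e^{\epsilon|x|^{2}}\,d\gamma\le\alpha$, and hence every polynomial moment of $fd\gamma$ is at most some $C=C(\alpha,\epsilon,n)$. In particular $fd\gamma$ has a bounded fourth moment, so the stability estimate of \cite{e2as49} applies and yields $\delta(f)\ge c(\alpha,\epsilon,n)\,\|\sqrt f-1\|_{L^{2}(d\gamma)}^{2}$. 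On the other hand, by Kantorovich--Rubinstein duality and the two centering constraints I may restrict the test functions in $W_{1}(fd\gamma,d\gamma)=\sup\{\int\varphi\,(f-1)\,d\gamma:\mathrm{Lip}(\varphi)\le1\}$ to those vanishing at the origin, so $W_{1}(fd\gamma,d\gamma)\le\int|x|\,|f-1|\,d\gamma$. Writing $f-1=(\sqrt f-1)(\sqrt f+1)$ and applying Cauchy--Schwarz in $L^{2}(d\gamma)$,
\[
W_{1}(fd\gamma,d\gamma)\ \le\ \Big(\int|x|^{2}(\sqrt f+1)^{2}\,d\gamma\Big)^{1/2}\|\sqrt f-1\|_{L^{2}(d\gamma)}\ \le\ \big(\sqrt{m_{2}(fd\gamma)}+\sqrt{m_{2}(\gamma)}\big)\,\|\sqrt f-1\|_{L^{2}(d\gamma)},
\]
where the last step uses $\int|x|^{2}\sqrt f\,d\gamma\le\sqrt{m_{2}(\gamma)\,m_{2}(fd\gamma)}$. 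Since $m_{2}(fd\gamma)\le C$, the two estimates combine to give $W_{1}(fd\gamma,d\gamma)\le a_{\alpha,\epsilon}\,\delta(f)^{1/2}$.

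\textbf{Lower bound for $a_{\alpha,\epsilon}$ and sharpness of the exponent.} I will test the inequality on $f_{t}:=1+t(|x|^{2}-n)$ as $t\downarrow0$: this is non-negative (because $|x|^{2}-n\ge-n$), normalized, and centered, since $|x|^{2}-n$ is $L^{2}(d\gamma)$-orthogonal to $1$ and to every $x_{i}$. The stated condition on $\alpha$ is nothing but $\alpha>(1-4\epsilon)^{-n}=\big(\int e^{2\epsilon|x|^{2}}\,d\gamma\big)^{2}$, and $|x-y|^{2}\le2|x|^{2}+2|y|^{2}$ gives $\iint f_{t}(x)f_{t}(y)e^{\epsilon|x-y|^{2}}\,d\gamma(x)\,d\gamma(y)\le\big(\int f_{t}e^{2\epsilon|x|^{2}}\,d\gamma\big)^{2}=(1-4\epsilon)^{-n}+O(t)\le\alpha$ for $t$ small, so $f_{t}$ is admissible. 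From the distributional identity $(|x|^{2}-n)\,d\gamma=-\nabla\!\cdot(x\,d\gamma)$ one has $\int\varphi\,(|x|^{2}-n)\,d\gamma=\int\nabla\varphi\cdot x\,d\gamma$, whose supremum over $\mathrm{Lip}(\varphi)\le1$ is $\int|x|\,d\gamma=m_{1}(\gamma)$ and is attained at $\varphi(x)=|x|$; hence $W_{1}(f_{t}d\gamma,d\gamma)=t\,m_{1}(\gamma)$ \emph{exactly}. A Taylor expansion --- legitimate because $f_{t}\ge1-tn$ and all integrands are polynomials times $e^{-|x|^{2}/2}$ --- gives $\H(f_{t})=\tfrac{t^{2}}{2}\int(|x|^{2}-n)^{2}\,d\gamma+O(t^{3})=nt^{2}+O(t^{3})$ and $\tfrac12\I(f_{t})=\tfrac{t^{2}}{2}\int 4|x|^{2}\,d\gamma+O(t^{3})=2nt^{2}+O(t^{3})$, so $\delta(f_{t})=nt^{2}+O(t^{3})$. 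Therefore $W_{1}(f_{t}d\gamma,d\gamma)/\delta(f_{t})^{1/2}\to m_{1}(\gamma)/\sqrt n$ as $t\downarrow0$, which forces $a_{\alpha,\epsilon}\ge m_{1}(\gamma)/\sqrt n$. Finally $m_{3}(\gamma)=(n+1)\,m_{1}(\gamma)$ --- for $X\sim\gamma$ one has $|X|^{2}\sim\chi^{2}_{n}$, so $\bE|X|^{3}/\bE|X|=2\,\Gamma(\tfrac{n+3}{2})/\Gamma(\tfrac{n+1}{2})=n+1$ --- hence $m_{1}(\gamma)/\sqrt n=|n\,m_{1}(\gamma)-m_{3}(\gamma)|/\sqrt n$, which for $n=1$ equals $m_{1}(\gamma)=\sqrt{2/\pi}$. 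The very same family, with $W_{1}\asymp t$ and $\delta\asymp t^{2}$, rules out any estimate $W_{1}\le a\,\delta^{\beta}$ with $\beta>\tfrac12$, so the exponent $\tfrac12$ is sharp.

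\textbf{The main difficulty.} The crux is the choice of metric in the upper bound. Going instead through the $L^{1}$-stability of \cite{Log} and splitting $\int|x|\,|f-1|\,d\gamma$ at a radius $R$ produces only $W_{1}\lesssim\delta^{1/2}\sqrt{\log(1/\delta)}$, short of the clean exponent by a logarithmic factor. What removes the logarithm is that the exponential moment does two things at once: it furnishes the fourth moment that makes the \cite{e2as49} stability \emph{quadratic} in $\|\sqrt f-1\|_{L^{2}(d\gamma)}$, and it lets the factorization $f-1=(\sqrt f-1)(\sqrt f+1)$ carry the weight $|x|$ against merely the second moment --- so that both inputs match at exponent $\tfrac12$. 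This is exactly why a bare second-moment bound is not enough: it gives only the $W_{1}^{4}$-type control of \cite{IK18} and it admits the escape-of-mass densities of \cite{MR4455233} along which $\delta/W_{1}^{2}\to0$. A secondary, routine-but-delicate point is controlling the cubic remainder in the expansion of $\delta(f_{t})$ simultaneously with keeping $f_{t}$ in the admissible class.
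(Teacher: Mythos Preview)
Your argument is correct but follows a genuinely different route from the paper's on both halves.

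\textbf{Upper bound.} The paper stays entirely inside the optimal--transport proof of the LSI: from the pointwise inequality $\delta(f)\ge\tfrac12\bigl(\int|T-x+\nabla\ln f|\,f\,d\gamma\bigr)^{2}$ it pulls out $W_{1}(fd\gamma,d\gamma)\le\sqrt{2}\,\delta^{1/2}(f)+\int|\nabla f|\,d\gamma$, and then bounds $\int|\nabla f|\,d\gamma=2\int|u|\,|\nabla u|\,d\gamma\le 2\|\nabla u\|_{L^{2}(d\gamma)}$ using the $H^{1}$--stability of \cite{arXiv:23026}, which applies \emph{directly} under the exponential moment hypothesis and yields an explicit constant $a_{\alpha,\epsilon}=2(2/\eta(\alpha,\epsilon))^{1/2}+\sqrt 2$. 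Your path instead converts the exponential moment into a fourth--moment bound via Jensen, invokes \cite{e2as49} for quadratic $L^{2}$--stability of $\sqrt f$, and reaches $W_{1}$ through Kantorovich--Rubinstein and the factorization $f-1=(\sqrt f-1)(\sqrt f+1)$. This is more elementary in that it avoids the Brenier map entirely, but it trades the explicit constant for an indirect one, and it leans on \cite{e2as49} giving the exponent $2$ in $\|\sqrt f-1\|_{L^{2}}$ under a fourth--moment bound; the paper's own text asserts this only qualitatively, so you are implicitly relying on the full strength of that reference.

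\textbf{Lower bound and sharpness.} The paper tests with the rescaled Gaussians $f_{a}(x)=(2a+1)^{n/2}e^{-a|x|^{2}}$; you test with $f_{t}(x)=1+t(|x|^{2}-n)$. Since $f_{a}=1+a(n-|x|^{2})+o(a)$ these families agree at first order and produce the same limiting ratio $|nm_{1}(\gamma)-m_{3}(\gamma)|/\sqrt{n}$. Your use of $(|x|^{2}-n)\,d\gamma=-\nabla\!\cdot(x\,d\gamma)$ gives the exact value $W_{1}(f_{t}d\gamma,d\gamma)=t\,m_{1}(\gamma)$, whereas the paper only uses the lower bound $W_{1}\ge\bigl|\int|x|(f_{a}-1)\,d\gamma\bigr|$; on the other hand the paper's family is automatically a probability density for all $a>0$, while yours needs $t<1/n$. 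Your identification $(2\pi)^{-n}(\pi/(0.5-2\epsilon))^{n}=(1-4\epsilon)^{-n}$ and the admissibility check match the paper's computation. The sharpness of the exponent follows from either family in the same way.
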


\section{The Proofs}

\begin{proof}[Proof of Theorem \ref{p5}]
The first step is to consider a probability measure $fd\gamma$ $\&$ $T=\nabla \Phi$ the Brenier map which pushes forward $fd\gamma$ to $d\gamma$. The proof of \eqref{eq:LSI} via optimal transport \cite{MR1894593} implies
$$2 \delta(f) \ge \int |T(x)-x+\nabla \ln f|^2 fd\gamma:$$ 
define $\mu:= \Phi-\frac{1}{2}|x|^2$ so that 
$$
\text{det}(I+D^2 \mu(x)) e^{-|x+\nabla \mu(x) |^2/2}=f(x) e^{-|x|^2/2}.
$$
Therefore taking the $\ln$ and integrating,

\begin{align*}
\int f \ln f d\gamma &\le \int f \Big[\Delta \mu - x \cdot \nabla \mu \Big] d\gamma-\frac{1}{2} \int |\nabla \mu|^2 f d\gamma\\
&=-\int \nabla \mu \cdot \nabla f d\gamma-\frac{1}{2} \int |\nabla \mu|^2 f d\gamma\\
&=-\frac{1}{2}\int \Big|\nabla \mu+\frac{\nabla f}{f} \Big|^2 fd\gamma +  \frac{1}{2} \int \frac{|\nabla f|^2}{f}  d\gamma.\\
\end{align*}
Note this readily implies
 $$
 \frac{1}{2}\int \Big|T-x+\nabla \ln f \Big|^2 fd\gamma \le  \frac{1}{2} \int \frac{|\nabla f|^2}{f}  d\gamma-\int f \ln f d\gamma,
 $$
which thanks to Jensen's inequality yields
\begin{equation} \label{j}
 \delta(f) \ge \frac{1}{2}\int |T(x)-x+\nabla \ln f|^2 fd\gamma \ge \frac{1}{2} \Big(\int |T(x)-x+\nabla \ln f| fd\gamma \Big)^2, 
\end{equation}
$$
 \int |\nabla \ln f| fd\gamma -\int |T(x)-x|fd\gamma \le \int |T(x)-x+\nabla \ln f| fd\gamma \le \sqrt{2 \delta(f)}.
$$ 
Hence since $n=1$,
$$
 \int |\nabla \ln f| fd\gamma \le \int |T(x)-x|fd\gamma+\sqrt{2 \delta(f)}=W_1(fd\gamma, d\gamma)+\sqrt{2 \delta(f)}.
$$
Furthermore, the end of the proof of \cite[Proposition C.1]{IK18} in addition to \cite[Corollary 6]{MR3567822} (in the one-dimensional argument, positivity $\&$ the local boundedness can be removed)  imply
\begin{equation} \label{wx}
	\frac{1}{2}\int \frac{|\nabla f|^{2}}{f}d\gamma-\int f\ln f d\gamma
	\geq \frac{1}{4n}\Big(2\int f\ln f d\gamma+(m_{2}(\gamma)-m_{2}(fd\gamma))\Big)^{2},
\end{equation}
\begin{equation} \label{tr}
W_1(fd\gamma, d\gamma) \le a\max\{(H(f)\delta(f))^{\frac{1}{4}}, (H(f)\delta(f))^{\frac{1}{2}}\}.
\end{equation}
Since 
$$
m_{2}(fd\gamma) \le  \alpha,
$$
\begin{align*}
2H&=2\int f\ln f d\gamma \\
&\le 2\delta^{\frac{1}{2}}(f)+\alpha+m_{2}(d\gamma).
\end{align*}
Note
\begin{align*}
&\max\{(H(f)\delta(f))^{\frac{1}{4}}, (H(f)\delta(f))^{\frac{1}{2}}\} \\
&\le t_a\max\{(\delta^{\frac{1}{2}}(f)+r_a)\delta(f))^{\frac{1}{4}}, (\delta^{\frac{1}{2}}(f)+r_a)\delta(f))^{\frac{1}{2}}\}\\
&=t_a\max\{(\delta^{\frac{3}{2}}(f)+r_a\delta(f))^{\frac{1}{4}}, (\delta^{\frac{3}{2}}(f)+r_a\delta(f))^{\frac{1}{2}}\};
\end{align*}
assuming 
$$
l_a \le \delta(f) \le L_a,
$$

$$
W_1(fd\gamma, d\gamma) \le a\max\{(H(f)\delta(f))^{\frac{1}{4}}, (H(f)\delta(f))^{\frac{1}{2}}\} \le c_a =\frac{c_a}{l_a} l_a \le \frac{c_a}{l_a} \delta(f).
$$

Supposing 
$$
\delta(f) \le l_a
$$
and $l_a<<1$, 
$$
\max\{(\delta^{\frac{3}{2}}(f)+r_a\delta(f))^{\frac{1}{4}}, (\delta^{\frac{3}{2}}(f)+r_a\delta(f))^{\frac{1}{2}}\} \le x_a \delta(f)^{\frac{1}{4}}.
$$
Supposing
$$
\delta(f) \ge l_a
$$
$ l_a>>1$ (observe this case is easy since if $\delta(f) \ge l_a$, subject to the second moment assumption, $W_1(fd\gamma, d\gamma) \le t_a$), 
$$
\max\{(\delta^{\frac{3}{2}}(f)+r_a\delta(f))^{\frac{1}{4}}, (\delta^{\frac{3}{2}}(f)+r_a\delta(f))^{\frac{1}{2}}\} \le j_a \delta^{\frac{3}{4}}(f).
$$

Therefore
\begin{align*}
 \int |\nabla f| d\gamma&= \int |\nabla \ln f| fd\gamma \\
 &\le \int |T(x)-x|fd\gamma+\sqrt{2 \delta(f)}\\
 &=W_1(fd\gamma, d\gamma)+\sqrt{2 \delta(f)}\\
 &\le \max\{x_a,j_a\}(\delta^{\frac{1}{4}}(f)+\delta^{\frac{3}{4}}(f))+\sqrt{2 \delta(f)}.
\end{align*}
Thus set
$$
f=|u|^2(\frac{x}{\sqrt{2\pi}}).
$$
Then 
$$
\int f e^{-\frac{|x|^{2}}{2}}(2\pi)^{-\frac{n}{2}}dx=     \int |u|^2 e^{-\pi |x|^2}dx= 1,
$$
$$
\int |\nabla f| d\gamma =\sqrt{\frac{2}{\pi}}\int |\nabla u||u| e^{-\pi |x|^2}dx.
$$

Hence utilizing \cite{Log}
\begin{align*}
&\int |\nabla u|e^{-\pi |x|^2}dx \\
&=\int |\nabla u||u|e^{-\pi |x|^2}dx +\int |\nabla u|(1-|u|)e^{-\pi |x|^2}dx\\
&\le \sqrt{\frac{\pi}{2}}\int |\nabla f| d\gamma+\int |\nabla u||1-u|e^{-\pi |x|^2}dx\\
 &\le \sqrt{\frac{\pi}{2}}\int |\nabla f| d\gamma+(\int |\nabla u|^2e^{-\pi |x|^2}dx)^{1/2}  (\int |1-u|^2 e^{-\pi |x|^2}dx)^{1/2}\\
 & \le m_a(\delta^{\frac{1}{4}}(f)+\delta^{\frac{3}{4}}(f))+(\int |\nabla u|^2e^{-\pi |x|^2}dx)^{1/2}\sqrt{\overline{\kappa}}\Big[  \frac{1}{\pi}\int |\nabla u|^2 e^{-\pi |x|^2}dx-\int |u|^2 \ln |u|^2   e^{-\pi |x|^2}dx \Big]^{1/2}.
\end{align*}

Note
$$
\frac{1}{2}\int \frac{|\nabla f|^{2}}{f}d\gamma-\int f\ln f d\gamma= \frac{1}{\pi}\int |\nabla u|^2 e^{-\pi |x|^2}dx-\int |u|^2 \ln |u|^2   e^{-\pi |x|^2}dx
$$
$$
\int |x|^2|u(x)|^2e^{-\pi |x|^2}dx =\frac{1}{2\pi}\int|x|^2fd\gamma \le \frac{1}{2\pi} \alpha,
$$
which combined with \cite[Theorem 1.17]{IK18} implies
\begin{align*}
&\frac{1}{\pi} \int |\nabla u(x)|^2 e^{-\pi |x|^2}dx \\
&\le \Big|\pi \int |x|^2e^{-\pi |x|^2}dx-\pi \int |x|^2|u(x)|^2e^{-\pi |x|^2}dx\Big|+\sqrt{2n} \Big(\frac{1}{\pi}\int |\nabla u|^2 e^{-\pi |x|^2}dx-\int |u|^2 \ln |u|^2   e^{-\pi |x|^2}dx \Big)^{\frac{1}{2}}\\
&+\Big( \frac{1}{\pi}\int |\nabla u|^2 e^{-\pi |x|^2}dx-\int |u|^2 \ln |u|^2   e^{-\pi |x|^2}dx\Big)\\
& \le q_\alpha(1+\delta^{\frac{1}{2}}_*(u)+\delta_*(u)).
\end{align*}
Therefore utilizing the above estimates,
\begin{align*}
&\int |\nabla u|e^{-\pi |x|^2}dx \\
 &\le \overline{m}_a(\delta^{\frac{1}{4}}_*(u)+\delta^{\frac{3}{4}}_*(u))+(\pi q_\alpha(1+\delta^{\frac{1}{2}}_*(u)+\delta_*(u)))^{1/2}\sqrt{\overline{\kappa}}\Big[  \delta_*(u)\Big]^{1/2}\\
&\le  a_\alpha(\delta^{\frac{1}{4}}_*(u)+\delta_*(u)).
\end{align*}
\end{proof}
\begin{proof}[Proof of  Corollary \ref{wl}]
Observe that 
$$
||f_k-1||_{W^{1,1}(\mathbb{R}, d\gamma)} \le a_\alpha \Big(\delta^{\frac{1}{4}}(f_k)+\delta^{\frac{3}{4}}(f_k)\Big),
$$
$$
\partial_{x_k} f=f_k'(x_k) \Pi_{i \neq k} f_i(x_i),
$$
$$
\delta(f)=\sum_k\delta(f_k),
$$
yield
\begin{align*}
\int |\nabla f| d\gamma&= \int \sqrt{\sum_k (\partial_{x_k} f)^2} d\gamma\\
&\le \int \sum_k |\partial_{x_k} f| d\gamma\\
&= \sum_k \int_{\mathbb{R}^{n-1}} \big(\int_{\mathbb{R}} |f_k'(x_k)| \frac{e^{-\frac{x_k^2}{2}}}{\sqrt{2\pi}} dx_k\big) \Pi_{i \neq k} f_i(x_i) \frac{e^{-\sum_{i\neq k}\frac{x_i^2}{2}}}{(\sqrt{2\pi})^{n-1}} dx_1 dx_2\ldots dx_{k-1} dx_{k+1}\ldots dx_n \\
&=\sum_k \int_{\mathbb{R}} |f_k'(x_k)| \frac{e^{-\frac{x_k^2}{2}}}{\sqrt{2\pi}} dx_k\\
&\le \sum_k a_\alpha \Big(\delta^{\frac{1}{4}}(f_k)+\delta^{\frac{3}{4}}(f_k)\Big)\\
&\le a_\alpha n^{3/4}( \delta^{\frac{1}{4}}(f)+\delta^{\frac{3}{4}}(f)).
\end{align*}
The proof of (2) is similar.
\end{proof}

\begin{proof}[Proof of  Theorem \ref{px5}]
Observe thanks to \eqref{j},
$$
 \delta^{\frac{1}{2}}(f) +\frac{1}{\sqrt{2}}\int |\nabla f| d\gamma \ge  \frac{1}{\sqrt{2}} \int |T(x)-x| fd\gamma \ge  \frac{1}{\sqrt{2}} W_1(fd\gamma, d\gamma); 
$$
in particular, let 
$$
f=|u|^2.
$$
Via \cite[Theorem 1]{arXiv:23026}

\begin{align*}
\int |\nabla f| d\gamma&= 2 \int |u| |\nabla u| d\gamma\\
&\le 2 \sqrt{\int |\nabla u|^2d\gamma}\\
& \le 2(2/\eta(\alpha, \epsilon))^{\frac{1}{2}} \delta^{\frac{1}{2}}(f).
\end{align*}

This yields the conclusion via
$$
a_{\alpha,\epsilon}=2(2/\eta(\alpha, \epsilon))^{1/2} +\sqrt{2}.
$$

Set
$$
f_a(x):=(2a+1)^{\frac{n}{2}}e^{-a|x|^2}.
$$
Several calculations yield
$$
\delta(f_a)=na-\frac{n}{2} \ln(2a+1).
$$
Assuming $\Gamma_1=fd\gamma$ is the first marginal of $\Gamma$, $\Gamma_2=d\gamma$ is the second marginal of $\Gamma$,
 
$$
W_1(fd\gamma, d\gamma)=\inf \int \int |x-y| d\Gamma \ge |\int |x|fd\gamma-\int |y|d\gamma|,
$$

\begin{align*}
 |\int |x|f_ad\gamma-\int |y|d\gamma|&= |\int |x|((2a+1)^{\frac{n}{2}}e^{-a|x|^2}-1)d\gamma|\\
 &=|\int |x|(a(n-|x|^2)+o(a))d\gamma|,\\
\end{align*}

$$
\frac{|\int |x|f_ad\gamma-\int |y|d\gamma|}{a}=|\int |x|(n-|x|^2+\frac{o(a)}{a})d\gamma|,
$$

\begin{align*}
\frac{\delta^{\frac{1}{2}}(f_a)}{W_1(f_ad\gamma, d\gamma)} &\le \frac{\delta^{\frac{1}{2}}(f_a)}{|\int |x|f_ad\gamma-\int |y|d\gamma|} \\
&=\frac{\delta^{\frac{1}{2}}(f_a)}{|\int |x|(a(n-|x|^2)+o(a))d\gamma|} \\
&=\frac{(\frac{\delta(f_a)}{a^2})^{\frac{1}{2}}}{|\int |x|(n-|x|^2+\frac{o(a)}{a})d\gamma|} \\
&=\frac{(\frac{na-\frac{n}{2} \ln(2a+1)}{a^2})^{\frac{1}{2}}}{|\int |x|(n-|x|^2+\frac{o(a)}{a})d\gamma|}, 
\end{align*}

$$
(\frac{na-\frac{n}{2} \ln(2a+1)}{a^2})^{\frac{1}{2}} \rightarrow \sqrt{n},
$$
with $a\rightarrow 0^+$,
$\&$
$$
|\int |x|(n-|x|^2+\frac{o(a)}{a})d\gamma| \rightarrow |nm_1(d\gamma)-m_3(d\gamma)|.
$$
Observe now that if $\epsilon < .25$ $\&$

$$
\alpha >\frac{1}{(2\pi)^n}\large(\frac{\pi}{-2\epsilon+.5}\large)^{n},
$$
one has assuming $a>0$ is small,
$$
W_{1}(f_ad\gamma,d\gamma) \le a_{\alpha, \epsilon} \delta^{\frac{1}{2}}(f_a).
$$
The argument is: observe that 

$$
e^{\epsilon|x-y|^2}\le e^{2\epsilon(|x|^2+ |y|^2)},
$$
which yields

\begin{align*}
\int \int f_a(x)f_a(y) e^{\epsilon|x-y|^2}d\gamma(x) d\gamma(y) &\le (\frac{2a+1}{2\pi})^n\large(\frac{\pi}{a-2\epsilon+.5}\large)^{n};
\end{align*}
supposing
$$
\alpha >\frac{1}{(2\pi)^n}\large(\frac{\pi}{-2\epsilon+.5}\large)^{n},
$$

then when $a>0$ is small, 

$$
(\frac{2a+1}{2\pi})^n\large(\frac{\pi}{a-2\epsilon+.5}\large)^{n}<\alpha
$$

and that implies
$$
W_{1}(f_ad\gamma,d\gamma) \le a_{\alpha, \epsilon} \delta^{\frac{1}{2}}(f_a).
$$
When
$$
W_{1}(fd\gamma,d\gamma) \le a_{\alpha, \epsilon} \delta^{\frac{1}{2}}(f),
$$
and $a_{\alpha, \epsilon}$ is the sharp constant,
\begin{align*}
\frac{1}{a_{\alpha, \epsilon}}&\le \frac{\delta^{\frac{1}{2}}(f_a)}{W_1(f_ad\gamma, d\gamma)}\\
&\le\frac{(\frac{na-\frac{n}{2} \ln(2a+1)}{a^2})^{\frac{1}{2}}}{|\int |x|(n-|x|^2+\frac{o(a)}{a})d\gamma|} \rightarrow \frac{\sqrt{n}}{|nm_1(d\gamma)-m_3(d\gamma)|}.
\end{align*}
When $n=1$,
$$
\frac{\sqrt{n}}{|nm_1(\gamma)-m_3(\gamma)|}=\frac{\sqrt{2\pi}}{2}.
$$
In order to finish the argument, assume via contradiction that there is the estimate
$$
W_{1}(fd\gamma,d\gamma) \le a_{\alpha, \epsilon} \mu(\delta^{\frac{1}{2}}(f)),
$$
$\mu(a)=o(a)$. Therefore
\begin{align*}
W_{1}(f_ad\gamma,d\gamma)& \le a_{\alpha, \epsilon} \mu(\delta^{\frac{1}{2}}(f_a))\\
& =a_{\alpha, \epsilon}\frac{\mu(\delta^{\frac{1}{2}}(f_a))}{\delta^{\frac{1}{2}}(f_a)}\delta^{\frac{1}{2}}(f_a),\\
\end{align*}
and
$$
\limsup_{a\rightarrow 0^+} \frac{\delta^{\frac{1}{2}}(f_a)}{W_{1}(f_ad\gamma,d\gamma)} \le \frac{\sqrt{2\pi}}{2},
$$
easily imply
\begin{align*}
\frac{1}{a_{\alpha, \epsilon}}&\le\limsup_{a\rightarrow 0^+} \frac{\mu(\delta^{\frac{1}{2}}(f_a))}{\delta^{\frac{1}{2}}(f_a)}\frac{\delta^{\frac{1}{2}}(f_a)}{W_{1}(f_ad\gamma,d\gamma)}\\
&=0.
\end{align*}
In particular, this yields the contradiction.
\end{proof}

\bibliography{References}
\bibliographystyle{amsplain}

\end{document}